\newtheorem{thm}{\noindent\hspace*{5mm}Theorem}
\newtheorem{lem}{\noindent\hspace*{5mm}Lemma}
\theoremstyle{definition}
\begin{document}
\pagebreak

%\date{}

%\begin{center}
\title{\textbf{TRANSFORMATION OF THE DISCRETE STABLE PROCESS VIA BRANCHING REPRODUCTION ENVIRONMENT}\thanks{This study is financed by the European Union's NextGenerationEU, through the National Recovery and Resilience Plan of the Republic of Bulgaria, project No BG-RRP-2.004-0008. }}

\vspace{2cm}

\date{}

\author{M. Slavtchova-Bojkova$^{*,**},$ \and P. Mayster$^{**}$}

\maketitle

%\end{center}

\vspace{1.5cm}
\begin{abstract}

The current paper focuses on studying  the impact of immigration with an infinite mean, driven by a discrete-stable compound Poisson process, when it is entering the branching environment with infinite variance of reproduction. Our goal is to determine the explicit form of the probability generating function  and subsequently to analyze the probability of extinction, aiming to understand the long-term behavior of such processes.

\end{abstract}

\vspace{1cm} {\bf Keywords:} branching processes, discrete-stable  distribution, Linnik distribution, homogeneous immigration, asymptotic behaviour
\vspace{1cm}

{\bf 2010 Mathematics Subject Classification:} 60J80,  60K05
\section{Introduction}

In this study, we aim to determine the exact distribution of the number of living cells or individuals in a population described by a continuous-time Markov branching process (MBP) with homogeneous in time immigration. Additionally, we assume that immigration follows a discrete stable process, characterized by its probability generating function (p.g.f.), with the number of arriving individuals governed by a given probability measure. Once immigration occurs within the ``branching environment", the immigrants begin to evolve according to the MBP governing laws.

Within this framework, our central question is: How does the ``branching environment" transform the immigration process? Furthermore, what conclusions can be drawn about the immigration process after undergoing this ``branching transformation", particularly when our focus is on deriving the exact p.g.f. of the population size?

Later, we will use these results to analyze the probability of extinction, with the aim of understanding the long-term behavior of these processes.

Continuous-time branching processes with immigration were initially introduced by \textsc{Sevastyanov}  in his influential paper $^{\small{\cite{Se}}}$.   %$ (1957),$ \cite{Se}
 In these models, immigration occurs through a homogeneous Poisson process, where new individuals arrive randomly at its jump points. The immigrants then independently evolve according to a continuous-time MBP.

Numerous extensions of branching processes with immigration have since been developed and thoroughly explored. Seminal reviews by \textsc{Sevastyanov} $^{\small{\cite{Sevastyanov_68}}}$, and \textsc{Vatutin} and \textsc{Zubkov} $^{\small{\cite{Vatutin_AZ_87, Vatutin_AZ_93}}},$ have highlighted many key results. More recent advancements have been made by \textsc{Barczy et al.} $^{\small{\cite{Barczy1,Barczy2}}}$, \textsc{González et al.}  $^{\small{\cite{Gonz1}}}$, and \textsc{Li et al.}  $^{\small{\cite{Li}}}$, among others, who have expanded the theory of these processes.

This paper introduces a novel approach for explicitly deriving the p.g.f., a task that is generally challenging in the theory of branching processes, for a specific class of  MBP with homogeneous immigration (Theorem 1). Then, by applying Laplace transforms to suitably normalized processes, Theorem 2 establishes detailed limit results based on the relationship between immigration and branching reproduction rates.

\subsection{Model of the branching reproduction with immigration}

Let $X(t), t\geq 0,$ be a MBP with infinitesimal generating function $ f(s)=K(h(s)-s), |s|\leq 1,$ starting with one particle as initial condition. The branching mechanism (the reproduction of particles) is defined by the p.g.f. $h(s), |s|\leq 1$ and exponential life-time of particles with parameter $K > 0$. The immigration flux of particles is defined by the compound Poisson process $ S(t), t\geq 0$.

Let $\{ I_1, I_2, \dots\}$ denote the sequence of independent identically distributed random variables representing the number of immigrating particles, arriving in the branching system by the exponentially distributed  intervals with parameter $\theta >0$. This way, the flow of immigration is defined by a compound Poisson process with p.g.f.
$$ G(t,s)=\exp\{-\theta t(1-g(s))\}, \quad g(s)=\mathbb{E}[s^I], \quad I = I_n, \quad n = 1, 2, \dots.$$
Then branching process with immigration $Y(t), t\geq 0,$ is defined as follows:
$$Y(t)=\sum ^{N(t)}_{i=1}\sum^{I_i}_{j=1}X_{i,j}(t-\tau _i), \quad \mbox{where} \quad
\mathbb{E}[s^{N(t)}] = \exp\{-\theta t(1-s)\},$$
the sequence $ \{\tau_1,\tau_2, \dots \}$ describes the arrival times of immigrants and inter-arrival times intervals $ (\tau_n-\tau_{n-1}), n = 2, 3, \dots$ are exponentially distributed with parameter $\theta>0$ and $ X_{i,j}(t)$ are independent copies of the MBP without immigration $X(t)$. The parameter $\theta>0$ controls the intensity of immigration.

\subsection{Immigration with infinite mean and reproduction with infinite variance}
We consider the immigration flux of particles with infinite mean defined by the discrete-stable process $S(t), t\geq 0,$ with p.g.f.
\begin{equation}    \label{S}
 G(t, s) = \exp\left\{-\theta t (1-s)^\gamma \right\} ,\quad |s|\leq 1,\quad 0<\gamma<1.
\end{equation}
The infinitesimal generating function for the p.g.f. $G(t,s)$ is denoted by
 $$\varphi(s)=\theta(1-g(s)), \quad {\mbox{so that}} \quad G(t,s)=e^{-t\varphi (s)}.$$
The number of immigrating particles is defined by the L\'{e}vy measure $ \Pi(k)$
$$ \Pi(k)=P(I=k)=(-1)^{k-1}\frac{[\gamma]_{k\downarrow}}{k!},\quad 0 < \gamma < 1,\quad k = 1, 2, \dots$$
 given by Sibuya p.g.f.,  see \textsc{Sibuya} $^{\small{\cite{Sib}}}$,
$$g(s)=1-(1-s)^\gamma, \quad g(s)=1- \sum^\infty_{k=0}\frac{s^k (-1)^k [\gamma]_{k\downarrow}}{k!}= \sum^\infty_{k=1}\frac{s^k (-1)^{k-1} [\gamma]_{k\downarrow}}{k!},$$
where $[\gamma]_{k\downarrow}= (\gamma)(\gamma-1)...(\gamma-k+1).$

We study the transformation of a flux of immigrating particles entering domain of branching reproduction.
Suppose, the branching mechanism with infinite variance, is defined by the p.g.f.
of the newly-born particles given by
\begin{equation}  \label{offspring_pgf}
h(s)=s+\frac{(1-s)^{1+\beta}}{1+\beta}, \quad h(0)=\frac{1}{1+\beta}<1, \quad h(1)=1, \quad 0<\beta<1.
\end{equation}
In this case the infinitesimal generating function for $X(t),$  defined by the branching mechanism $h(s)$ and exponential life-time of particles with parameter $K>0,$ takes the form
$$f(s)=K(h(s)-s)=K\frac{(1-s)^{1+\beta}}{1+\beta}.$$
The backward Kolmogorov equation in this case is   as follows
$$\frac{d}{dt}(F(t,s)))=\frac{K}{1+\beta}(1-F(t,s))^{1+\beta},\quad F(0,s)=s.$$
It has the following explicit solution
\begin{equation} \label{explicit_solution}
 1 - F(t,s)= (1-s)\left\{ 1+\frac{K \beta (1-s)^\beta t}{1+\beta}\right\}^{-1/\beta}.
\end{equation}
 The partial derivatives $F'_{t} $ and $ F'_{s}$ satisfy the forward Kolmogorov equation.
 The second derivative $ F''_{ss}$ in the neighborhood of the point
 $ s=1$ describes the infinite variance of reproduction
$$ F''_{ss}\sim \frac{1}{(1-s)^{1-\beta}},\quad s\rightarrow 1,\quad 0<\beta<1. $$
The infinite mean of immigration is defined by the first derivative of the p.g.f. $G(t,s)$
$$G'_s(t,s)= e^{-\theta t(1-s)^\gamma}\left(\frac{\theta t}{(1-s)^{1-\gamma}}\right)\sim \frac{1}{(1-s)^{1-\gamma}},\quad s\rightarrow 1,\quad 0<\gamma<1. $$
For more  details see the book of  \textsc{Sevastyanov}    $^{\small{\cite{S}}},$ p.44, and
   \textsc{Steutel et al.} $^{\small{\cite{Ste}}},  ^{\small{\cite{SW}}}, ^{\small{\cite{SH}}}$.

 The transformation of the immigration flux of particles $Y(t)$ is studied by the p.g.f.
$\Phi (t,s) = \mathbb{E}\left[s^{Y(t)}\right] $, represented by the integral,
 \begin{equation}\label{integral_repr}
\Phi (t,s)= \exp\{-\theta \int^t_{u=0}(1-F(u,s))^\gamma du\},\quad \Phi (0,s)= 1,
\end{equation}
where $F(t,s)$ is satisfying (\ref{explicit_solution}).

The equality of the parameters $\gamma=\beta $ is resulting in  the explicit solution $\Phi (t,s)$ of equation (\ref{integral_repr})  being the p.g.f. of discrete Linnik distribution for fix time $t>0$.
The asymptotic limit distribution is represented by one-sided positive Linnik distribution, see
\textsc{Christoph  et al.} $^{\small{\cite{Chr}}}, ^{\small{\cite{ChrS}}}$.
 When $\gamma=\beta \rightarrow 1,$ the branching reproduction approaches the linear  birth-death process, and discrete-stable process becomes homogeneous Poisson process. Respectively, discrete Linnik distribution is transformed into the Negative-Binomial and $R_{+}$ -valued Linnik distribution becomes Gamma distribution, in  full compliance with the classical theory.

 The inequality of the parameters $\gamma\neq\beta $ leads to the explicit solution of (\ref{integral_repr})  $\Phi (t,s)$ represented by the Gauss hypergeometric function. Its series expansion shows the first approximation as p.g.f. of discrete-stable distribution with  order $ 0<\gamma<1$. The proper asymptotic limit distribution exists for the normalised process $Z(t)$ only when
$$0<\gamma<\beta\leq 1,\quad Z(t)=\frac{Y(t)}{(t/A)^{1/\gamma}},\quad A=\frac{1+\beta}{K\beta},\quad t\rightarrow \infty. $$
All other parameter configurations and normalization lead to zero or infinity degeneration.

 Linear forms and statistical criteria for the Linnik distribution started in  1953  with \textsc{Linnik} article $ ^{\small{\cite{L}}}$.
Due to \textsc{Devroye} $^{\small{\cite{D}}}$ in 1990 a non-negative integer valued discrete Linnik random variables are defined by their p.g.f.
 The  stable distribution and its applications are developed in the book of \textsc{Uchaikin et al.} $^{\small{\cite{UZ}}}$
starting with the articles of \textsc{Zolotarev} $^{\small{\cite{Z}}}, ^{\small{\cite{Zo}}}$. The analogue of the discrete
 self-decomposability and stability is defined in \textsc{Steutel et al.} $^{\small{\cite{Ste}}}, ^{\small{\cite{SH}}}$.
 The recent article by \textsc{Mitov et al.}   $^{\small{\cite{MY}}}$  develops the case with non-homogeneous immigration.

\section{The p.g.f. of MBP with immigration defined by the discrete-stable process}
\begin{thm}
Let $ X(t)$ be a critical MBP with  branching mechanism p.g.f., given by (\ref{offspring_pgf})
 and the solution $F(t,s)$ of the Kolmogorov equation %(\ref{explicit_solution})
is given by (\ref{explicit_solution}).
If the flow of immigration is defined by the discrete-stable process with p.g.f. (\ref{S})
 then the number $Y(t)$ of particles alive at time $t>0$ has the following p.g.f. 

(i) If  $\beta = \gamma$ then
$$ \Phi(t,s) = \left(\frac{1}{1+\frac{t(1-s)^\beta}{A}}\right)^{\theta A}:=\left(\frac{1}{B(t,s)}\right)^{\theta A},$$
 where
\begin{equation} \label{B_function}
 A=\frac{1+\beta}{K\beta},\quad B(t,s) = 1+\frac{t(1-s)^\beta}{A},\quad B:=B(t,s);
\end{equation}

(ii) If $\beta \neq \gamma$ then
\begin{equation}    \label{P}
\Phi(t,s)= \exp\left\{\frac{-\theta A(1-s)^\gamma}{(1-s)^\beta }\left(\frac{B^{1-\delta}-1 }{1-\delta }\right)\right\},\quad \delta =\frac{\gamma}{\beta}>0,
\end{equation}
or in series expansion

\begin{equation}    \label{se}
 \Phi (t,s)= \exp\left\{-\theta t(1-s)^\gamma\left(1+\sum ^\infty_{j=2}\left(\frac{(1-s)^\beta t} {A}\right)^{j-1} \frac{[-\delta]_{({j-1})\downarrow}}{j!}\right) \right\},
\end{equation}
where  the relation between the increasing and decreasing factorials,
\begin{equation} \label{fac}
 [1-\delta]_{j\downarrow} =(1- \delta)[-\delta]_{(j-1)\downarrow}=-(\delta-1)(-1)^{j-1}[\delta]_{(j-1)\uparrow}.
\end{equation}
For any fix time  $t<\infty, \lim_{s\rightarrow 1}\Phi (t,s)=1.$
\end{thm}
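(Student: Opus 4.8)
The plan is to start directly from the integral representation (\ref{integral_repr}) and to evaluate the integral in closed form by inserting the explicit solution (\ref{explicit_solution}) of the Kolmogorov equation. First I would raise (\ref{explicit_solution}) to the power $\gamma$ and, recalling that $A=(1+\beta)/(K\beta)$ so that $K\beta/(1+\beta)=1/A$, rewrite the integrand as
$$(1-F(u,s))^\gamma = (1-s)^\gamma\left(1+\frac{u(1-s)^\beta}{A}\right)^{-\gamma/\beta} = (1-s)^\gamma\, B(u,s)^{-\delta},$$
with $B(u,s)=1+u(1-s)^\beta/A$ as in (\ref{B_function}) and $\delta=\gamma/\beta$. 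The factor $(1-s)^\gamma$ is constant in $u$ and pulls outside the integral, leaving only a single power of the affine function $B(u,s)$ to be integrated.

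The next step is the elementary substitution $v=B(u,s)$, with $dv=((1-s)^\beta/A)\,du$, which turns $\int_0^t B(u,s)^{-\delta}\,du$ into a power integral. Here the two cases of the theorem split off naturally. When $\delta=1$ (that is, $\gamma=\beta$) the antiderivative is the logarithm, producing $(A/(1-s)^\beta)\ln B(t,s)$; since then $(1-s)^\gamma/(1-s)^\beta=1$, the exponent collapses to $\theta A\ln B(t,s)$ and exponentiation yields part (i), $\Phi(t,s)=B(t,s)^{-\theta A}$. When $\delta\neq 1$ the power rule gives $(A/(1-s)^\beta)\,(B(t,s)^{1-\delta}-1)/(1-\delta)$, and multiplying back by $(1-s)^\gamma$ and $-\theta$ reproduces exactly the closed form (\ref{P}).

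To obtain the series expansion (\ref{se}) I would expand $B^{1-\delta}=(1+x)^{1-\delta}$ with $x=t(1-s)^\beta/A$ by the generalized binomial theorem, so that $B^{1-\delta}-1=\sum_{j\ge 1}[1-\delta]_{j\downarrow}x^j/j!$. Dividing by $1-\delta$ and invoking the factorial identity (\ref{fac}), namely $[1-\delta]_{j\downarrow}=(1-\delta)[-\delta]_{(j-1)\downarrow}$, cancels the denominator and shifts the falling-factorial index by one; factoring out the $j=1$ term (which equals $x$, since $[-\delta]_{0\downarrow}=1$) and recombining with the prefactor $A(1-s)^\gamma/(1-s)^\beta$ leaves precisely $-\theta t(1-s)^\gamma\,(1+\sum_{j\ge 2}(\cdots))$, as claimed. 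The final limit assertion then follows transparently from (\ref{se}): as $s\to 1$ the factor $(1-s)^\gamma\to 0$ while the bracketed series tends to $1$, so the whole exponent vanishes and $\Phi(t,s)\to 1$, confirming that $\Phi(t,\cdot)$ is a genuine p.g.f.

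I expect the only real subtlety to be bookkeeping rather than any conceptual obstacle: correctly tracking the index shift between $[1-\delta]_{j\downarrow}$ and $[-\delta]_{(j-1)\downarrow}$ through identity (\ref{fac}), and being clear that the binomial expansion of $(1+x)^{1-\delta}$ is an exact re-expansion of the closed form (\ref{P}), valid in the range of $s$ where $x<1$, rather than an independent derivation. Everything else reduces to elementary integration and exponentiation.
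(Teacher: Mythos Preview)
Your proposal is correct and follows essentially the same route as the paper: the same substitution $v=B(u,s)=1+u(1-s)^\beta/A$ in the integral representation (\ref{integral_repr}), the same case split at $\delta=1$ (logarithm versus power rule), and the same binomial expansion of $B^{1-\delta}$ combined with the factorial identity (\ref{fac}) to pass to the series form. Your treatment is in fact slightly more explicit than the paper's in justifying the final limit $\Phi(t,s)\to 1$ as $s\to 1$.
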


\begin{proof}
 The immigration flux of particles, entering the domain of branching has the p.g.f. (\ref{integral_repr})
 $$\Phi (t,s)= \exp\{-\theta \int^t_{u=0}(1-F(u,s))^\gamma du\},\quad \Phi (0,s)= 1.$$
 Following (\ref{explicit_solution}) and (\ref{B_function}), after the change of variable it is written, in the form
  $$y=1+\frac{u(1-s)^\beta}{A},\quad du=\frac{A dy}{(1-s)^\beta},\quad (1-F(u,s))^\gamma=\frac{(1-s)^\gamma }{y^{\gamma/\beta}}.$$
To calculate the integral, we must distinguish two cases, $\beta=\gamma$ and  $\beta\neq \gamma$.

(i) Let
$$ \beta=\gamma,\quad \Phi (t,s)= \exp\left\{\frac{-\theta A(1-s)^\gamma}{(1-s)^\beta }\int^B_{y=1}\frac{dy}{y}\right\}= \left(\frac{1}{B}\right)^{\theta A}. $$
(ii) Now, we calculate for $\beta\neq \gamma, $ and $ \delta=\gamma/\beta >0$, the following
$$\Phi (t,s)=\exp\left\{\frac{-\theta A(1-s)^\gamma}{(1-s)^\beta }\int^B_{y=1}\frac{dy}{y^\delta}\right\}
= \exp\left\{\frac{-\theta A(1-s)^\gamma}{(1-s)^\beta }\left(\frac{B^{1-\delta}-1 }{1-\delta }\right)\right\} . $$
The series expansion of the ``positive binomial" or negative binomial, converges for $$0<\frac{t(1-s)^\beta}{A} <1,\quad 0<\beta<1.$$
The relation between the increasing and decreasing factorial (\ref{fac}) gives the following,
 $$1-\delta>0,\quad   B^{1-\delta}=\left(1+\frac{t(1-s)^\beta}{A}\right)^{1-\delta}=1+\sum ^\infty_{j=1}\left(\frac{(1-s)^\beta t} {A}\right)^j \frac{[1-\delta]_{j\downarrow}}{j!} $$
 and
  $$\delta-1>0,\quad   B^{1-\delta}=\left( \frac{1}{1-\frac{-t(1-s)^\beta}{A}}\right)^{\delta-1}=1+\sum ^\infty_{j=1}\left(\frac{-(1-s)^\beta t} {A}\right)^j \frac{[\delta-1]_{j\uparrow}}{j!}.  $$
Consequently,
$$B^{1-\delta}-1=\left(\frac{(1-s)^\beta t} {A}\right)(1-\delta)\sum ^\infty_{j=1}\left(\frac{(1-s)^\beta t} {A}\right)^{j-1} \frac{[-\delta]_{(j-1)\downarrow}}{j!}. $$
Then, the time parameter $t>0$, appears as multiple of $(1-s)^\gamma$ in exponent (\ref{P}) after the series expansion and integration,
$$\Phi (t,s)= \exp\left\{-\theta t(1-s)^\gamma\left(1+\sum ^\infty_{j=2}\left(\frac{(1-s)^\beta t} {A}\right)^{j-1} \frac{[-\delta]_{(j-1)\downarrow}}{j!}\right) \right\} . $$
\end{proof}
 \begin{lem}
The p.g.f. $$\Phi (t,s)= \exp\left\{\frac{-\theta A(1-s)^\gamma}{(1-s)^\beta }\left(\frac{B^{1-\delta}-1 }{1-\delta }\right)\right\} $$
and its derivative yield the equation in partial derivatives,
$$ \Phi'_t(t,s)=-\varphi(s) \Phi(t,s)+f(s)\Phi'_s(t,s),\quad \varphi(s)=\theta(1-s)^\gamma,\quad f(s)=\frac{(1-s)^{1+\beta}}{A \beta}. $$

\end{lem}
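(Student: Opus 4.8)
The plan is to avoid differentiating the closed-form expression involving $B^{1-\delta}$ directly, and instead work from the integral representation (\ref{integral_repr}), which represents the same p.g.f.\ but is far more convenient here. Write $\Phi(t,s)=e^{\psi(t,s)}$ with $\psi(t,s)=-\theta\int_0^t(1-F(u,s))^\gamma\,du$. Since $\Phi'_t=\psi'_t\,\Phi$ and $\Phi'_s=\psi'_s\,\Phi$, dividing the asserted identity through by $\Phi$ reduces the whole claim to the scalar first-order relation
\[
\psi'_t(t,s)=-\varphi(s)+f(s)\,\psi'_s(t,s),
\]
so it suffices to verify this identity for $\psi$.

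The key observation, and the step that does the real work, is that the integrand $g(u,s):=(1-F(u,s))^\gamma$ inherits the forward Kolmogorov transport equation satisfied by $F$. Indeed, by the chain rule $\partial_u g=-\gamma(1-F)^{\gamma-1}F'_u$ and $\partial_s g=-\gamma(1-F)^{\gamma-1}F'_s$; since the excerpt records that $F$ obeys the forward equation $F'_u=f(s)F'_s$, these two expressions immediately yield
\[
\partial_u g(u,s)=f(s)\,\partial_s g(u,s).
\]
This is precisely what lets me trade the $s$-derivative under the integral for a $u$-derivative that telescopes.

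With this in hand the computation is short. First, $\psi'_t=-\theta\,g(t,s)=-\theta(1-F(t,s))^\gamma$ directly. Second, differentiating under the integral sign and applying the transport identity,
\[
f(s)\,\psi'_s=-\theta\int_0^t f(s)\,\partial_s g(u,s)\,du=-\theta\int_0^t \partial_u g(u,s)\,du=-\theta\bigl(g(t,s)-g(0,s)\bigr),
\]
where $f(s)$ was pulled inside the integral because it does not depend on $u$. Since $F(0,s)=s$ we have $g(0,s)=(1-s)^\gamma$, so $f(s)\,\psi'_s=-\theta g(t,s)+\theta(1-s)^\gamma$. Recalling $\varphi(s)=\theta(1-s)^\gamma$, it follows that $-\varphi(s)+f(s)\,\psi'_s=-\theta g(t,s)=\psi'_t$, which is the reduced identity; multiplying through by $\Phi$ recovers the stated PDE.

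The only genuine obstacles are analytic rather than algebraic: justifying differentiation under the integral sign in $\psi'_s$ (which follows from the smoothness of $F$ in $s$ on $|s|<1$ together with local boundedness of the integrand in $u$) and invoking the forward Kolmogorov equation $F'_u=f(s)F'_s$, which is asserted in the excerpt and can be checked against the explicit solution (\ref{explicit_solution}) if desired. I would also note that this derivation never uses the split between $\beta=\gamma$ and $\beta\neq\gamma$, so the same PDE holds for the case (i) form of $\Phi$ as well; this serves as a useful consistency check on Theorem 1.
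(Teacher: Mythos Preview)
Your argument is correct. The paper states this lemma without proof, so there is nothing to compare against; your use of the integral representation together with the forward Kolmogorov equation $F'_u=f(s)F'_s$ to show that $g(u,s)=(1-F(u,s))^\gamma$ satisfies the same transport relation, and then integrating to reduce the identity to $\psi'_t=-\varphi+f\psi'_s$, is clean and complete. The observation that the derivation is uniform in $\gamma/\beta$ and hence covers case (i) as well is a nice bonus.
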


\section{Asymptotic behavior}
It is clear that the process $Y(t)$ has some infinite mean for $t>0$. To obtain any asymptotic limit for $ t\rightarrow \infty$ we must normalise $ Y(t)$ taking into account the behaviour of the extinction probability. The non-extinction probability for MBP $X(t)$ is comparable to the extinction probability of branching process with immigration $Y(t)$,
$$1-F(t,0)=\left(\frac{1}{1+\frac{t}{A}}\right)^{1/\beta}, $$
$$\beta=\gamma,\quad  \Phi (t,0)=\left(\frac{1}{1+\frac{t}{A}}\right)^{\theta A} ,$$
$$\beta\neq \gamma,\quad   \Phi (t,0)= \exp\left\{-\frac{\theta A}{1-\delta}\left(\left(1+\frac{t}{A}\right)^{1-\delta}-1\right) \right\} . $$
We must
 normalise $Y(t)$ with $$ z(t)=\left(\frac{A }{t}\right)^{1/\beta}, \mbox{or} \quad  z(t)=\left(\frac{A }{t}\right)^{1/\gamma}.$$
 We consider the process
 $$ Z(t)=Y(t)z, \quad z=z(t),\quad \lim_{t\rightarrow \infty}z(t)=0. $$
The Laplace transform of the normalized process $Z(t)$ is %written as
$$\Psi(t,\lambda)=\mathbb{E}[\exp \{-\lambda Z(t)\}]=\mathbb{E}[\exp \{-\lambda z \}]^{Y(t)}, \Psi(t,\lambda)= \Phi (t,e^{-\lambda z}), \lim_{t \to \infty} \Psi(t,\lambda)=\Psi(\lambda).$$
Let $ s=e^{-\lambda z}$, then for $ 1-s\rightarrow 0, \quad  t\rightarrow \infty$, $ 1-s\sim \lambda z,$ and we have
$$(1-s)^\gamma=(1-e^{-\lambda z})^\gamma \sim \lambda^\gamma z^\gamma,\quad (1-s)^\beta=(1-e^{-\lambda z})^\beta \sim \lambda^\beta z^\beta.$$
\begin{thm}
Let the processes $ X(t)$ and $ Y(t)$ be defined by (\ref{offspring_pgf}), (\ref{explicit_solution}) and (\ref{S}) as in the previous theorem.
Then the asymptotic limit of the normalized process $Z(t)$ exists following the parameter configuration and normalisation as follows,
$$1) \quad 0< \gamma=\beta\leq 1,\quad \frac{t}{A}=z^{-\beta},\quad \Psi(\lambda)=\left(\frac{1}{1+\lambda^\beta}\right)^{\theta A},\quad \lambda>0, $$
$$2) \quad 0< \gamma<\beta\leq 1,\quad \frac{t}{A}=z^{-\gamma},\quad \Psi(\lambda)=\exp\left\{-\theta A \lambda^\gamma \right\},\quad \lambda>0, $$
$$3)  \quad 0< \gamma<\beta\leq 1,\quad \frac{t}{A}=z^{-\beta}, \quad \Psi(\lambda)=0,\quad \lambda>0, $$
$$4) \quad 0< \beta<\gamma\leq 1,\quad \frac{t}{A}=z^{-\beta},\quad \Psi(\lambda)=1,\quad \lambda>0.$$
$$5) \quad 0< \beta<\gamma\leq 1,\quad \frac{t}{A}=z^{-\gamma}, \quad \Psi(\lambda)=1,\quad \lambda>0, $$

\end{thm}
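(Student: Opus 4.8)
The plan is to feed the two p.g.f.\ formulas of Theorem~1 into the Laplace transform $\Psi(t,\lambda)=\Phi(t,e^{-\lambda z})$ and let $t\to\infty$ (equivalently $z\to 0$) under each prescribed scaling. Setting $s=e^{-\lambda z}$, the only inputs I need are the two asymptotic equivalences recorded just before the statement, $(1-s)^\gamma\sim\lambda^\gamma z^\gamma$ and $(1-s)^\beta\sim\lambda^\beta z^\beta$. Since the exponential is continuous, the whole problem reduces to computing the limit of the exponent of $\Phi$, and in every case that exponent is an elementary product of two factors whose behaviour I can read off directly: the weight $w:=\frac{t}{A}(1-s)^\beta$ sitting inside $B=1+w$, and the prefactor $(1-s)^{\gamma-\beta}$.

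First, case~1 ($\gamma=\beta$) is immediate from part~(i): with $t/A=z^{-\beta}$ one has $w\to\lambda^\beta$, hence $B\to 1+\lambda^\beta$ and $\Psi(\lambda)=(1+\lambda^\beta)^{-\theta A}$. For the remaining cases I would work from the closed form (\ref{P}), writing the exponent as $-\theta A\,(1-s)^{\gamma-\beta}\,\tfrac{B^{1-\delta}-1}{1-\delta}$ with $\delta=\gamma/\beta$. In case~2 ($\gamma<\beta$, $t/A=z^{-\gamma}$) the weight satisfies $w\sim\lambda^\beta z^{\beta-\gamma}\to 0$, so $B\to 1$ and $B^{1-\delta}-1\sim(1-\delta)w$; multiplying by $(1-s)^{\gamma-\beta}\sim(\lambda z)^{\gamma-\beta}$ makes the powers of $z$ cancel and leaves exactly $-\theta A\lambda^\gamma$, i.e.\ the positive-stable transform $\exp\{-\theta A\lambda^\gamma\}$. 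Equivalently this is the leading term of the series (\ref{se}), every further term of which carries a positive power $z^{(j-1)(\beta-\gamma)}$ and vanishes.

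For the three degenerate cases I would show the bracket $\tfrac{B^{1-\delta}-1}{1-\delta}$ tends to a strictly positive finite constant and then let the prefactor $(1-s)^{\gamma-\beta}\sim(\lambda z)^{\gamma-\beta}$ decide the outcome. In case~3 ($\gamma<\beta$, $t/A=z^{-\beta}$) one has $w\to\lambda^\beta$, $B\to 1+\lambda^\beta$ and the bracket is a positive constant, while $\gamma-\beta<0$ forces $(1-s)^{\gamma-\beta}\to\infty$; hence the exponent tends to $-\infty$ and $\Psi(\lambda)=0$. In cases~4 and~5 ($\beta<\gamma$, so $\delta>1$) the prefactor instead vanishes because $\gamma-\beta>0$; it then suffices to confirm the bracket stays bounded, namely $B\to 1+\lambda^\beta$ with $(1+\lambda^\beta)^{1-\delta}\in(0,1)$ in case~4, and $B\to\infty$ with $B^{1-\delta}\to 0$ so that the bracket $\to\tfrac{1}{\delta-1}$ in case~5. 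In both the exponent tends to $0$ and $\Psi(\lambda)=1$.

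The hard part will be case~2, the sole nondegenerate limit for $\gamma\neq\beta$: there the exponent is an indeterminate $0\cdot\infty$ form, so I must verify that the first-order expansion $B^{1-\delta}-1=(1-\delta)w+O(w^2)$ really captures the limit, i.e.\ that the $O(w^2)$ correction, weighted by the blowing-up prefactor, still vanishes; invoking the explicit series (\ref{se}) with a uniform bound on its tail for small $w$ is the cleanest way to secure this. A secondary point to make rigorous everywhere is the replacement $1-e^{-\lambda z}\sim\lambda z$: I would control the relative $O(z)$ error inside each power and check it does not perturb the finite limits, which is routine since all exponents depend continuously on $(1-s)$.
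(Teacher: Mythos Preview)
Your proposal is correct and follows essentially the same route as the paper: both analyses substitute $s=e^{-\lambda z}$ into the closed form (\ref{P}) (resp.\ part (i) for case~1), split the exponent as $-\theta A\,(1-s)^{\gamma-\beta}\cdot\frac{B^{1-\delta}-1}{1-\delta}$, track the limit of $w=\frac{t}{A}(1-s)^\beta$ under each scaling, and for case~2 appeal to the series (\ref{se}) to isolate the leading $-\theta t(1-s)^\gamma$ term. Your added remarks on controlling the $O(w^2)$ tail in case~2 and the error in $1-e^{-\lambda z}\sim\lambda z$ are points the paper leaves implicit.
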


\begin{proof}
$1)$ \textit{The first point }is obvious,
$$\Psi(t,\lambda)= \left( 1+\frac{ (1-e^{-\lambda z})^\beta t} {A}\right)^{-\theta A},\quad \lim_{t\rightarrow \infty }\Psi(t,\lambda)=\left(\frac{1}{1+\lambda^\beta}\right)^{\theta A}. $$
$2)$ \textit{The second point,} $ \delta=\frac{\gamma}{\beta}<1$.
In this normalisation, $ \beta-\gamma>0$, the series expansion (\ref{se}) converges to a bounded sum, so that we derive the following representation
 $$ B^{1-\delta}=\left(1+\frac{t(1-s)^\beta}{A}\right)^{1-\delta}=1+\sum ^\infty_{j=1}\left(\frac{(1-s)^\beta t} {A}\right)^j \frac{[1-\delta]_{j\downarrow}}{j!}, \quad 1-\delta>0 .  $$
Then substracting the first multiple before the sum in the series expansion,  and taking into account $\lambda^\beta z^{\beta-\gamma} (-\delta)\rightarrow 0,$ when $ z\rightarrow 0, $
 $$\Psi(t, \lambda) = \exp\left\{-\theta A \lambda^\gamma\left(1+ \lambda^\beta z^{\beta-\gamma} (-\delta)\left\{1+\sum ^\infty_{j=3}\left(\lambda^\beta z^{\beta-\gamma}\right)^{j-2} \frac{[-\delta-1]_{{j-2}\downarrow}}{j!} \right\}\right) \right\}. $$
 Consequently,
$$\Psi(\lambda)=\lim_{t\rightarrow \infty}\Phi (t,e^{-\lambda z})=\exp\left\{-\theta A \lambda^\gamma\right\}. $$

$3)$ \textit{The third point,} $ \delta=\frac{\gamma}{\beta}<1$ and the normalisation $z^\beta=\frac{A}{t}$
 implies for the function $B(t,s)\sim 1+\lambda^\beta$ .
 The inequality $ \gamma -\beta <0$, leads to the limit
  $$z\rightarrow 0,\quad (1-s)^{\gamma-\beta}\sim z^{\gamma-\beta}\rightarrow \infty. $$
  Obviously,
$$\lim_{t\rightarrow \infty}\Phi (t,s)=\lim_{t\rightarrow \infty} \exp\left\{-\theta A(1-s)^{\gamma-\beta}\left(\frac{B^{1-\delta}-1 }{1-\delta }\right)\right\}=0. $$
$4)$ \textit{The fourth point,}
when $0<\beta<\gamma\leq 1 $, then $ \delta=\gamma/\beta>1$. We have
$$\Phi (t,s)= \exp\left\{\frac{-\theta A(1-s)^{\gamma-\beta}}{\delta-1}\left(1-\frac{1}{B^{\delta-1}}\right)\right\},\quad \delta>1,\quad \gamma-\beta>0 . $$
The normalisation, $t=Az^{-\beta}$, implies
 $$ z^{\gamma-\beta}\rightarrow 0, \quad B^{\delta-1}\sim (1+z^{-\beta}\lambda^\beta z^\beta)^{\delta-1}=(1+ \lambda^\beta  )^{\delta-1}.   $$
 Consequently,
 $$\Psi(\lambda)=\lim_{t\rightarrow \infty}\Psi(t,\lambda)= 1,\quad \lambda>0. $$

$5)$ \textit{The fifth point,}
 Take, $t=Az^{-\gamma}$, $ \beta-\gamma<0,$
then $ z^{\beta-\gamma}\rightarrow \infty $, and
$$t=Az^{-\gamma},\quad B^{\delta-1}\sim(1+z^{-\gamma}\lambda^\beta z^\beta)^{\delta-1}=(1+ \lambda^\beta z^{\beta-\gamma}  )^{\delta-1}\rightarrow \infty .  $$
 Consequently,
  $$(1-s)^{\gamma-\beta} \rightarrow 0,\quad 1 -\frac{1}{B^{\delta-1}}\rightarrow
1 $$
  and
  $$\Phi (t,s)= \exp\left\{\frac{-\theta A(1-s)^{\gamma-\beta}}{\delta-1}\left(1-\frac{1}{B^{\delta-1}}\right)\right\},\quad \delta>1,\quad \gamma-\beta>0 , $$

 $$\lim_{t\rightarrow \infty} \Psi(t,\lambda)= 1,\quad \lambda>0. $$

\end{proof}
\section{Conclusion}

%This study of branching processes with immigration is particularly focusing on the state of these processes at a fixed moment in time, rather than their long-term behavior.
 The study delves into the probability generating functions, a tool often used in analyzing branching processes to understand the distribution of the number of individuals at a given time.
There's also interest in the power of the mean and variance. While the variance is infinite, suggesting heavy-tailed behavior or high variability in outcomes, the process remains discrete-stable in first approximation of the series expansion, implying a form of stability in the process's distribution even with extreme variability.

The explicit solution and   limit  results completely comply with the classical theory of \textsc{Sevastyanov}  $^{\small{\cite{S}}}$
and those for non-homogeneous immigration derived  by   \textsc{Mitov et al.}  $^{\small{\cite{MY}}}$.

\begin{center}
\date{

* Department of Probability, Operations Research and Statistics\\
Faculty of Mathematics and Informatics\\
Sofia University "St. Kl. Ohridski"\\ 5, blvd. J. Bourchier,
1164 Sofia, BULGARIA}\\

** Department of Operations Research, Probability and Statistics\\
Institute of Mathematics and Informatics\\ Bulgarian Academy of
Sciences \\1113 Sofia,
BULGARIA \\
\end{center}
\end{document}